\tikzstyle{every node}=[circle, draw, fill=black!50,
\newtheorem{theorem}{Theorem}[section]
\newtheorem{lemma}{Lemma}[section]
\renewcommand{\le}{\leqslant}
\renewcommand{\ge}{\geqslant}
\def\qed{\ifvmode\mbox{ }\else\unskip\fi\hskip 1em plus 10fill$\Box$}
\def\Ddots{\mathinner{\mkern1mu\raise\p@
\vbox{\kern7\p@\hbox{.}}\mkern2mu
\raise4\p@\hbox{.}\mkern2mu\raise7\p@\hbox{.}\mkern1mu}}
\def\R{\mathbb R}
\def\N{\mathbb N}
\def\E{\mathbb E}
\def\d{\delta}
\newcommand\widecheck[1]{%
\savestack{\tmpbox}{\stretchto{%
  \scaleto{%
    \scalerel*[\widthof{\ensuremath{#1}}]{\kern-.6pt\bigwedge\kern-.6pt}%
    {\rule[-\textheight/2]{1ex}{\textheight}}
  }{\textheight}%
}{0.5ex}}%
\stackon[1pt]{#1}{\scalebox{-1}{\tmpbox}}%
}
\title{A fractal-like configuration of point-line pairs for the minimal distance problem}
\author{Alexander Logunov, Dmitrii Zakharov}
\address{Department of Mathematics, Massachusetts Institute of Technology, Cambridge, MA 02139, USA}
\email{alogunov@mit.edu}
\address{Department of Mathematics, Massachusetts Institute of Technology, Cambridge, MA 02139, USA}
\email{zakhdm@mit.edu}
\date{}
\begin{document}

\begin{abstract}
    We show that for every $n \in \N$ there is a collection of points $p_1, \ldots, p_n$ and lines  $\ell_1, \ldots, \ell_n$ in the unit square such that for any $i$ we have $p_i \in \ell_i$ and the distance from $p_i$ to any other line $\ell_j$ is at least $c n^{\gamma-1}$ for some universal constants $c, \gamma>0$. This is better than a trivial construction by a polynomial factor.
\end{abstract}

\maketitle 

\section{Introduction}

In \cite{cohen2025lower}, Cohen, Pohoata and the second author considered the following problem about collections of point-line pairs. For an integer $n\ge 2$, what is the largest $\d>0$ such that there are points $p_1, \ldots, p_n \in [0,1]^2$ and lines $\ell_1, \ldots, \ell_n \subset \R^2$ such that $p_i \in \ell_i$ and for every $i\neq j$ we have $d(p_i, \ell_j) \ge \d$? By taking $p_i = (i/n, 0)$ and $\ell_i = p_i + (0,1)\R$ it is easy too see that $\d \ge 1/n$. On the other hand, by tiling $[0,1]^2$ with $\frac{1}{[\sqrt{n-1}]}\times \frac{1}{[\sqrt{n-1}]}$-squares we see that among any $n$ points in the unit square there is a pair at distance at most $\frac{\sqrt 2}{[\sqrt{n-1}]}$. So using that $d(p_i,\ell_j) \le d(p_i, p_j)$ this leads to the upper bound $\d \le \frac{\sqrt 2}{[\sqrt{n-1}]}$. 

In \cite{cohen2025lower}, authors managed to improve this simple upper bound to $\d \le n^{-2/3+o(1)}$. As a corollary, they obtained a new upper bound for Heilbronn's triangle problem which asks to determine the smallest number $\Delta(n)$ such that among any $n$ points in unit square there exists a triangle with area at most $\Delta(n)$. The connection between these two problems can be seen as follows: given a set of $n$ points in $[0,1]^2$ we can find a collection of at least $n/10$ disjoint pairs $\{p_i, q_i\}$ among these points so that $d(p_i, q_i) \le2 n^{-1/2}$ for all $i$. Let $\ell_i$ be the line passing through $p_i, q_i$ and note that if $d(p_i, \ell_j) \le \d$ for some $i\neq j$ then the points $p_i, p_j, q_j$ form a triangle of area at most $\Delta\le \frac 12 d(p_i, \ell_j) d(p_j, q_j) \le n^{-1/2}\d$. So better upper bounds on the minimal distance $\d$ lead to better upper bounds on the area of smallest triangle $\Delta$. In particular, the upper bound $\d\le n^{-2/3+o(1)}$ gives $\Delta \le n^{-7/6+o(1)}$.
This estimate improves on previous work on Heilbronn's triangle problem by Roth \cite{roth1972problem2}, \cite{roth1972problem3}, Koml\'os--Pintz--Szemer\'edi \cite{komlos1981heilbronn} and Cohen, Pohoata and the second author \cite{cohen2023new} and reaches a certain `high-low' limit of the method.  

Many authors (see e.g. the table on page 34 in \cite{cohen2023new}) have guessed that the bound $\Delta(n) \lesssim n^{-3/2}$ should hold based on the following heuristic.
Let $\ell_j = \overline{p_j q_j}$ be lines defined as above and let $H_j \subset [0,1]^2$ be the strip of points $x\in [0,1]^2$ at distance at most $\le K/n$ to $\ell_j$. If the points $p_1, \ldots, p_n$ were uniformly distributed in $[0,1]^2$ then we would expect that about $|H_j|n \sim K$ of them would land in the strip $H_j$. So for $K\gg 1$ there should be lots of points in the strip $H_j$ and so we should be able to find a triangle with area at most $2n^{-1/2} (K/n) \lesssim n^{-3/2}$. One way to turn this idea into a rigorous proof is to show that in the setup in the first paragraph, we have the upper bound $\d \lesssim n^{-1}$. Because of this, it seemed plausible to the authors of \cite{cohen2025lower} that the upper bound $\d\le n^{-1+o(1)}$ holds. In Appendix B of \cite{cohen2025lower}, they further note that this estimate, if true, would also imply non-trivial upper bounds for Heilbronn's problem for $k$-gons. 

In this note, we show that this is not the case, namely that there are collections of point-line pairs with $\d \gtrsim n^{\gamma -1}$ for some (small) constant $\gamma>0$. 

\begin{theorem}\label{thm:main}
    There exist constants $c,\gamma>0$ such that for any $n\ge 2$ there are points $p_1, \ldots, p_n \in [0,1]^2$ and lines $\ell_1, \ldots, \ell_n$ such that $p_i\in \ell_i$ and $d(p_i, \ell_j) \ge c n^{\gamma-1}$ for any $i\neq j$. 
\end{theorem}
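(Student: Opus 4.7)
The plan is to construct the configuration recursively, in the spirit of a fractal. I begin with a small \emph{base configuration} $\mathcal{C}_0 = \{(p_i, \ell_i)\}_{i=1}^m$ in $[0,1]^2$ whose \emph{ratio} $\delta_0 \cdot m$ strictly exceeds the trivial value $1$. A natural candidate is the unit equilateral triangle with $\ell_i$ through $p_i$ parallel to the opposite side, which gives $\delta_0 = \sqrt{3}/2$ and ratio $3\sqrt{3}/2 \approx 2.598$. However, the line directions of this base coincide exactly with the inter-point directions between the $p_i$'s, so a small perturbation (or a slightly larger base with generic line directions) will be required to avoid the algebraic coincidences described below.

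The recursive step takes $\mathcal{C}_k = \{(q_I, L_I)\}$ with $n_k$ pairs and minimum distance $\delta_k$, and produces $\mathcal{C}_{k+1}$ with $m n_k$ pairs by placing a copy of $\mathcal{C}_0$, scaled by some factor $\lambda_k$ and possibly rotated by a small angle chosen per copy, at each point $q_I$. The intra-copy minimum distance is $\lambda_k \delta_0$, while for two pairs in different copies $I \ne I'$ the relevant distance is
\[
d\bigl(q_I + \lambda_k p_i,\, q_{I'} + \lambda_k p_{i'} + \R v_{i'}\bigr) = \bigl|\langle q_I - q_{I'}, v_{i'}^\perp\rangle + \lambda_k \langle p_i - p_{i'}, v_{i'}^\perp\rangle\bigr|,
\]
where $v_{i'}$ is the direction of $\ell_{i'}$ and $v_{i'}^\perp$ its unit normal. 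When $v_{i'}$ is transverse to $q_I - q_{I'}$, the first term is at least $c\delta_k$; choosing $\lambda_k \sim \delta_k$ then forces $\delta_{k+1} \gtrsim \delta_0 \delta_k$. Iterating this idealized bound gives $\delta_k n_k \gtrsim R_0^{k+1}$ with $R_0 = m\delta_0 > 1$, which for $n = m^{k+1}$ translates into $\delta \gtrsim n^{\gamma-1}$ for $\gamma = \log_m R_0 > 0$.

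The main obstacle is controlling the inter-copy distance in the "bad" case when $v_{i'}$ is parallel or nearly parallel to $q_I - q_{I'}$: the first term in the display above vanishes, and the distance collapses to a quantity of order $\lambda_k$, which after many iterations becomes much smaller than the target $\delta_k$. For the equilateral base this is not merely an approximate issue but an exact coincidence that persists at every level, producing actual incidences of points on lines in the iterated configuration; since the inter-point directions of the fractal include scaled copies of the base inter-point directions at every scale, a single perturbation does not obviously fix all of them simultaneously. To resolve this I would either choose a base (possibly with $m > 3$) whose line directions are generically transverse to all inter-point directions that arise through the recursion, paying only a constant factor in $\delta_0$, or introduce carefully chosen small rotations of each scaled copy that break algebraic alignment at every scale. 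Proving that such a choice preserves a quantitative transversality bound uniformly through all scales---so that the inductive estimate $\delta_{k+1} \gtrsim \delta_0 \delta_k$ propagates with only bounded loss---will be the main technical work, and the final exponent $\gamma$ obtained this way will necessarily be small, consistent with the upper bound $\delta \le n^{-2/3 + o(1)}$ from \cite{cohen2025lower}.
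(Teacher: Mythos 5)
There is a decisive gap, and it is precisely the step you defer as ``the main technical work'': the idealized recursion $\delta_{k+1}\gtrsim\delta_0\delta_k$, $n_{k+1}=mn_k$ is not merely hard to justify --- it is false. With your equilateral base ($m=3$, $\delta_0=\sqrt3/2$) it would give $\delta\gtrsim n^{\gamma-1}$ with $\gamma=\log_3(3\sqrt3/2)\approx0.87$, contradicting the trivial upper bound $\delta\le 2n^{-1/2}$ (two of the $n$ points lie within $2n^{-1/2}$ of each other and $d(p_i,\ell_j)\le d(p_i,p_j)$); equivalently, $n_{k+1}\delta_{k+1}^2\gtrsim(m\delta_0^2)\,n_k\delta_k^2$ with $m\delta_0^2=9/4>1$ violates packing. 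So no choice of rotations or generic base can rescue the estimate whenever $\delta_0>1/\sqrt m$, and for any isotropic base the ``bad case'' you identify is unavoidable for a structural reason, not an algebraic coincidence: each line $\ell_{i'}$ crosses the entire square, a strip of width $\delta$ around it has area $\sim\delta$, and isotropically scaled self-similar copies are spread roughly uniformly, so such a strip must meet $\sim n\delta$ points of other copies. This is exactly the heuristic from the introduction of the paper forcing $\delta\lesssim1/n$ for ``uniform'' configurations; transversality or genericity cannot defeat it. What is needed is that the point set be very non-uniform along every line of the configuration.

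The paper's construction differs in the two places where your plan breaks. First, the rescaling is \emph{anisotropic and sheared}: each sub-copy is placed in a $w\times w^2$ box aligned with the slope $\theta_0$ of the parent pair, via $(x,y,\theta)\mapsto(x_0+wx,\,y_0+w\theta_0x+w^2y,\,\theta_0+w\theta)$. The vertical point-to-line distance then scales exactly by $w^2$ while each copy occupies height only $w^2$, so one can stack $\sim w^{-1}$ vertically translated copies at each parent point; that extra factor $w^{-1}$ drives the recursion $n(\delta)\ge w^{-1}n(C_2w)\,n(\delta/w^2)$, which with the trivial seed would only reproduce $\delta^{-1}$. Second, the gain is seeded not by a finite base with $m\delta_0>1$ but by a probabilistic lemma: among $N\sim\delta^{-1}\log(1/\delta)$ uniformly random points, at least half admit a slope whose $\delta$-strip misses all other points, because each point has $\sim(\delta\log(1/\delta))^{-1/2}$ essentially disjoint candidate strips, each empty with probability at least $\delta^{0.2}$. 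Feeding this logarithmic gain into the affine recursion and iterating is what converts it into the polynomial factor $n^{\gamma}$. Your proposal contains neither ingredient, so as written it does not lead to a proof.
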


Similar lower bounds hold for higher dimensional variants of this problem. For example, the same proof works for collections of point-line pairs in $[0,1]^d$ for any $d\ge 2$, we omit the details since they are essentially identical. On the other hand, Theorem \ref{thm:main} does not imply anything new about Heilbronn's triangle problem. However it does hint at the possibility that there could be better lower bound constructions: the current best lower bound $\Delta(n) \gtrsim \frac{\log n}{n^2}$ due to Koml\'os--Pintz--Szemer\'edi \cite{komlos1982lower} is a slight improvement over the `easy' lower bound $\Delta(n) \gtrsim n^{-2}$ due to Erd\H os.

In the next section we present the proof of Theorem \ref{thm:main}. The proof consists of two parts: first, we use the probabilistic method to beat the lower bound by a logarithmic factor and second, we use a recursive construction to amplify this improvement into a polynomial gain. In particular, the set of points and lines we construct is essentially a self-affine fractal. Self-affine fractals might be an interesting special class of configurations for further study of the minimal distance problem.

\section{Proof}

To ease the recursive construction it is convenient to introduce the following notation (very similar to the one used in \cite{cohen2025lower}). 
Let $\Omega = [-1,1]^2\times [-1,1]$, where we view $\Omega$ as the configuration space of incident point-line pairs in the plane. Namely, for $(p, \theta) \in \Omega$ let $\ell =\ell(p, \theta) = p + (1,\theta) \R$ be the line with slope $\theta$ through $p$. So we can identify a subset $X \subset \Omega$ with the point-line configuration $\{(p, \ell(p, \theta)), ~~(p, \theta)\in X\}$.

For $\omega=(x,y, \theta), \omega'=(x', y', \theta') \in \Omega$ define the vertical distance $d(\omega, \omega') = |y - y' -\theta'(x-x') |$. For $\omega, \omega'\in \Omega$, $d(\omega, \omega')$ measures the vertical distance from the point $p=(x, y)$ to the line $\ell' = \ell'(p', \theta')$, with $p'=(x', y')$. Since $|\theta'| \le 1$ it coincides with the euclidean distance up to a constant factor. 
For a point-line configuration $X \subset \Omega$ define the {\em minimal distance} of $X$ as
\[
d(X) = \min_{\omega\neq \omega'\in X} d(\omega, \omega').
\]

For $\d>0$ define $n(\d)$ to be the largest integer $n$ such that there exists $X \subset \Omega$ with $d(X) \ge \d$ and $|X| = n$. With this notation, it is easy to see that Theorem \ref{thm:main} is equivalent to the following.

\begin{theorem}\label{thm}
    There exist constants $\gamma,c>0$ such that $n(\d) \ge c\d^{-1-\gamma}$ for all $\d\in (0,1)$. 
\end{theorem}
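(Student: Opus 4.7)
The plan follows the two-step strategy the authors mention: first a base configuration that beats the trivial bound $n(\delta) \ge 1/\delta$ by some factor, then a self-affine recursive construction that compounds this gain into a polynomial improvement. The key structural facts used throughout are that the shear $\sigma_\tau(x, y, \theta) := (x,\ y + \tau x,\ \theta + \tau)$ preserves the vertical distance $d$, while the position-only rescaling $(x, y, \theta) \mapsto (sx, sy, \theta)$ multiplies $d$ by exactly $s$.

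For the base case I would fix a small $\delta_0$ and aim for $X_0 \subset \Omega$ with $|X_0| \gg 1/\delta_0$. The natural starting point is probabilistic: sample $N = C\log(1/\delta_0)/\delta_0$ points uniformly in $\Omega$ and note that the bad set $\{\omega : d(\omega, \omega') < \delta_0\}$ has measure $O(\delta_0)$ for every fixed $\omega'$, giving $O(N^2\delta_0)$ expected bad pairs. Naive deletion recovers only the trivial bound, so the desired log-factor improvement requires a sharper tool such as the Lov\'asz Local Lemma (each bad event is determined by two random points and so sits in the dependency graph of at most $O(N)$ other events) or a two-stage sampling that first randomizes angles and then positions so as to decorrelate the slabs.

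For the recursive step, suppose $X \subset \Omega$ has $n$ points, minimum distance $\delta$, and angle range $[-A, A]$ with $A \lesssim \delta$. For each $\omega_i = (x_i, y_i, \theta_i) \in X$ and a scale $s$ to be chosen, define
\[
\phi_i(x, y, \theta) = (sx + x_i,\ sy + s\theta_i x + y_i,\ \theta + \theta_i),
\]
the composition of the $s$-rescaling of position, the shear $\sigma_{\theta_i}$, and translation to $\omega_i$; set $X' = \bigcup_i \phi_i(X)$. A direct algebraic check gives the clean identity $d(\phi_i(\omega), \phi_i(\omega')) = s \cdot d(\omega, \omega') \ge s\delta$ for within-cluster pairs. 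Between pairs in distinct clusters $i \ne j$ the distance expands as
\[
d(\phi_i(\omega), \phi_j(\omega')) = \bigl| D_{ij} - \theta'(x_i - x_j) + s \cdot E \bigr|,
\]
where $D_{ij}$ is the original outer distance between $\omega_i$ and $\omega_j$, $\theta'$ is the inner angle of the second point, and $E$ is bounded polynomially in $A$ and the diameter of $X$. The dangerous cross-term satisfies $|\theta'(x_i - x_j)| \le 2A$, so the invariant $A \lesssim \delta$ controls it; choosing $s = c\delta$ small enough to also dominate the $s \cdot E$ perturbations then yields $|X'| = n^2$ with $d(X') \ge c\delta^2$.

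Iterating produces $n_k = n_0^{2^k}$ points at minimum distance $\delta_k \sim (c\delta_0)^{2^k}$, so $\log n_k / \log(1/\delta_k) \to \log n_0 / \log(1/(c\delta_0))$; this ratio exceeds $1$ as soon as the base case satisfies $n_0 > 1/(c\delta_0)$, a bound the log-factor probabilistic construction comfortably provides once $\delta_0$ is chosen small enough. Monotonicity of $n(\delta)$ then interpolates to $n(\delta) \ge c' \delta^{-1-\gamma}$ for some universal $\gamma > 0$ on all of $(0, 1)$. The main obstacle is preserving the angle invariant $A_k \lesssim \delta_k$ through iteration: one recursive step sends the angle range to $2A$ (outer plus inner) while contracting $\delta$ to $c\delta^2$, so $A_k/\delta_k \to \infty$ if one naively uses $X$ as both outer and inner. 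Overcoming this will require either an asymmetric recursion (outer of arbitrary angle range; inner constructed separately to be ``flat'' of width $\lesssim \delta$) or a careful two-parameter tracking of $(\delta_k, A_k)$ with adaptively chosen scales $s_k$. Making this compatibility precise, and preserving a useful base-case improvement through the level changes, is where I expect the bulk of the technical work to go.
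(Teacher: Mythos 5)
Your outline has the right two-part architecture (a probabilistic base configuration beating $1/\delta$ by a log factor, then a self-affine recursion), and your conjugation identity $d(\phi_i(\omega),\phi_i(\omega'))=s\,d(\omega,\omega')$ and the cross-cluster expansion are both correct. But the recursive step does not close as formulated, and the fix you propose cannot work. Because your map leaves the inner angles at unit scale ($\theta\mapsto\theta+\theta_i$), the dangerous term $\theta'(x_i-x_j)$ forces the invariant $A\lesssim\delta$, which, as you note, is destroyed by symmetric squaring. The repair you lean on --- an inner configuration that is ``flat'', with angle range $\lesssim\delta$ --- provably carries no surplus: if all slopes of a configuration lie in an interval of length $\le\delta/2$ (after a shear, in $[-\delta/4,\delta/4]$), then for any two of its elements $|y-y'|\ge d(\omega,\omega')-|\theta'|\,|x-x'|\ge\delta/2$, so such a configuration has $O(1/\delta)$ points and is no better than the trivial one. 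The angle spread of order $1\gg\delta$ is exactly where the gain lives and cannot be legislated away. The paper resolves this with a \emph{parabolic} rescaling $\psi(x,y,\theta)=(x_0+wx,\ y_0+w\theta_0x+w^2y,\ \theta_0+w\theta)$, which contracts the angle by $w$ together with $x$ and the $y$-coordinate by $w^2$; then $d$ scales by exactly $w^2$, every perturbation of an outer distance is $O(w)$, and these are absorbed by taking the \emph{outer} configuration at the matched scale $w$ (minimal distance $\ge C^2w$) while the inner configuration is arbitrary. The price of the $w^2$ contraction is that each step must multiply the count by more than $w^{-2}$ while $n(Cw)\sim w^{-1}\log(1/w)$; the paper recovers the missing factor $w^{-1}$ by stacking $\sim w^{-1}$ vertical translates $p_i+(0,Cjw^2)$ of each $w\times w^2$ cell. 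None of these three ingredients (angle contraction, scale-matched asymmetric product, vertical replication) appears in your outline, and without them --- or a genuinely different substitute --- the iteration you describe does not produce a polynomial gain.

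The base case is also not yet an argument. You correctly observe that naive deletion only recovers $1/\delta$, but the Lov\'asz Local Lemma suggestion is not obviously implementable: the relevant event is not ``all pairs are far apart'' but ``for each point there \emph{exists} a slope whose strip misses all other points,'' which is an existential statement over angles, not a conjunction of sparse bad events. The paper's Lemma 1 instead takes $N\sim\delta^{-1}\log(1/\delta)$ uniform points and, for each point, examines $M\sim(\delta\log(1/\delta))^{-1/2}$ strips in well-separated directions; each strip is empty of the other points with probability $(1-2\delta)^{N-1}\ge\delta^{0.2}$, the emptiness indicators are negatively correlated, and Chebyshev's inequality shows some strip is empty with probability at least $1/2$ per point. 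That concrete second-moment computation (and the specific choice $N\approx 0.1\,\delta^{-1}\log(1/\delta)$ making $\delta^{0.2}M\to\infty$) is the content of the base case and is missing from your sketch.
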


Theorem \ref{thm} now follows the following two lemmas.

\begin{lemma}\label{lem1}
    There exists a constant $c_1>0$ such that $n(\d) \ge c_1 \d^{-1} \log 1/\d$ for all $\d\in (0,1)$.
\end{lemma}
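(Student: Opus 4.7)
My plan is probabilistic with alteration, refined to beat the naive $\delta^{-1}$ barrier. The direct argument -- sample $N$ i.i.d.\ uniform points in $\Omega$ and delete one endpoint of each bad pair -- only produces $\sim \delta^{-1}$ points, because the slab $\{\omega' : d(\omega,\omega')<\delta\}$ has measure $O(\delta)$ in $\Omega$, so the expected number of bad ordered pairs is $O(N^{2}\delta)$, already comparable to $N$ by the time $N \sim \delta^{-1}$. To gain the $\log(1/\delta)$ factor I intend to oversample and extract a large independent set in the bad-pair graph via an Ajtai--Koml\'os--Szemer\'edi / Duke--Lefmann--R\"odl-type bound on graphs with few triangles.

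Concretely I draw $N = C\delta^{-2}$ i.i.d.\ uniform samples $\omega_{1},\dots,\omega_{N}\in\Omega$ and form the symmetric bad graph $G$ on $[N]$ whose edges are the pairs with $d(\omega_{i},\omega_{j})<\delta$ or $d(\omega_{j},\omega_{i})<\delta$. Any independent set in $G$ is automatically a valid configuration with minimum distance $\ge \delta$. By the slab bound, $\mathbb{E}|E(G)| = O(N^{2}\delta) = O(\delta^{-3})$ and hence average degree $\bar d = O(\delta^{-1})$; standard Chernoff plus a union bound then lets me prune a constant fraction of vertices to make the maximum degree comparable to $\bar d$.

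The central computation is the expected triangle count. For one cyclic orientation the three bad-pair conditions are linear forms in $(y_{i},y_{j},y_{k})$ whose coefficient vectors $(1,-1,0),(0,1,-1),(-1,0,1)$ sum to zero, so given $(x,\theta)$ only two of the three forms are independent. Pinning those two to size $O(\delta)$ in $y$-space succeeds with probability $O(\delta^{2})$, and the third is within $\delta$ precisely when the compatibility expression $\theta_{j}(x_{i}-x_{j})+\theta_{k}(x_{j}-x_{k})+\theta_{i}(x_{k}-x_{i})$ is $O(\delta)$, an event of probability $O(\delta)$ over uniform $(x,\theta)$. Summing over the $2^{3}$ orientations yields $\mathbb{P}(\{i,j,k\}\text{ spans a triangle in }G) = O(\delta^{3})$ and $\mathbb{E}|T(G)| = O(\delta^{-3})$, comparable to the edge count; hence the typical edge lies in $O(1)$ triangles.

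Deleting a small constant fraction of ``triangle-heavy'' edges enforces a uniform per-edge triangle bound, and the Duke--Lefmann--R\"odl-type independence number bound for graphs with bounded local triangle density then gives
\[
\alpha(G) \;\gtrsim\; \frac{N}{\bar d}\log \bar d \;\gtrsim\; \frac{1}{\delta}\log\frac{1}{\delta}.
\]
The main obstacle is honestly handling the degeneracy in the triangle estimate -- naively one might guess either $\delta^{2}$ (recognising only the rank-2 system) or $\delta^{6}$ (six independent inequalities), and the correct value $\delta^{3}$ emerges only after isolating the single compatibility inequality of measure $\delta$ -- together with the bookkeeping to pass from an average triangle-per-edge bound to the uniform one demanded by the AKS-type theorem.
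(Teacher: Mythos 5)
Your proposal is correct in outline but takes a genuinely different route from the paper. The paper samples only the $N\approx 0.1\,\delta^{-1}\log(1/\delta)$ \emph{points} uniformly in the square and then, for each point $p_i$, chooses the slope adaptively: it exhibits $M\approx(\delta\log(1/\delta))^{-1/2}$ slopes whose strips through $p_i$ are essentially disjoint outside a small square around $p_i$, notes that each strip is empty of the other points with probability $(1-2\delta)^{N-1}\ge\delta^{0.2}\gg M^{-1}$, and uses a second-moment/Chebyshev argument (the emptiness indicators are negatively correlated) to find an empty strip for each point with probability $\ge 1/2$; linearity of expectation then keeps $N/2$ point--line pairs with no deletion graph needed. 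You instead oversample $N=C\delta^{-2}$ full configurations $(p,\theta)\in\Omega$, form the bad-pair graph with average degree $O(\delta^{-1})$, show it is nearly triangle-free by exploiting the rank deficiency of the three vertical-distance conditions, and invoke an Ajtai--Koml\'os--Szemer\'edi/Shearer/Duke--Lefmann--R\"odl-type independence bound for locally sparse graphs. Both exploit the same extra degree of freedom (the slope), but in different guises: the paper turns it into many disjoint candidate strips per point, you turn it into a triangle-count deficit. The paper's argument is self-contained and elementary; yours imports a nontrivial black box but is the more standard ``uncrowded graph'' template from the Heilbronn lower-bound literature and would adapt easily to other distance functions.

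One small inaccuracy to flag: your claim that the compatibility event has probability $O(\delta)$ holds for the cyclic orientation (where the expression is $\theta_j(x_i-x_j)+\theta_k(x_j-x_k)+\theta_i(x_k-x_i)$ and one can condition on the largest coefficient), but for mixed orientations the compatibility expression degenerates to a product such as $(x_i-x_j)(\theta_k-\theta_j)$, whose probability of being $O(\delta)$ is $\Theta(\delta\log(1/\delta))$. Hence $\mathbb{E}|T(G)|=O(\delta^{-3}\log(1/\delta))$ rather than $O(\delta^{-3})$, and a typical edge lies in $O(\log(1/\delta))$ triangles rather than $O(1)$. This is harmless: the independence bound is logarithmic in the local edge density, so the loss is only an additive $\log\log(1/\delta)$ inside the logarithm and the conclusion $\alpha(G)\gtrsim\delta^{-1}\log(1/\delta)$ survives. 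You should also carry out the standard alterations (vertex deletion for high degree and for high triangle count) at the level of vertices rather than edges so as to match the hypotheses of the locally sparse independence theorem, but this is routine.
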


\begin{lemma}\label{lem2}
    There exists a constant $C_2\ge 1$ such that for any $w, \d \in (0,1)$ such that $\d \le w^2$ we have $n(\d) \ge w^{-1} n(C_2w)n(\d/w^2)$.
\end{lemma}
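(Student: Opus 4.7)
Plan. I will construct a set $X \subset \Omega$ with $|X| \ge w^{-1} n(C_2 w) n(\delta/w^2)$ and $d(X) \ge \delta$ by inserting about $w^{-1}$ shrunken copies of a near-optimal fine configuration $X_f$ inside the slab around each element of a near-optimal coarse configuration $X_c$. Two primitives drive the setup: for any $\omega_0 = (x_0, y_0, \theta_0) \in \Omega$, the affine shear
\[
\Psi_{\omega_0}(a, b, c) := (a + x_0,\; b + y_0 + \theta_0 a,\; c + \theta_0)
\]
sends the origin to $\omega_0$ and preserves $d$ exactly (direct algebra using the definition of $d$), while the anisotropic scaling $(x, y, \theta) \mapsto (wx, w^2 y, w\theta)$ multiplies $d$ by $w^2$. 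Composing the two embeds a shrunken $X_f$ into a small box near any $\omega_0$ in a distance-compatible way.

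Take $X_c, X_f$ realizing $n(C_2 w)$ and $n(\delta/w^2)$ respectively. By an appropriate rescaling of $X_c$ (for example, an anisotropic map $(x, y, \theta) \mapsto (\alpha x, \alpha^2 y, \alpha \theta)$ for a suitable $\alpha < 1$) we may assume $X_c$ sits in a sub-box small enough that the points produced below lie in $\Omega$; this shrinks $d(X_c)$ by a fixed constant factor that is absorbed into the choice of $C_2$. Pick $m = \lceil w^{-1} \rceil$ shifts $t_1 < \cdots < t_m$ in an interval of length $O(w)$ with $t_{k+1} - t_k \ge 5 w^2$ (possible since $m \cdot 5 w^2 = O(w)$), and for each $\omega_0 \in X_c$, $k \in [m]$, $\omega = (x, y, \theta) \in X_f$, put
\[
\tilde\omega_{\omega_0, k, \omega} := \Psi_{\omega_0}(wx,\; t_k + w^2 y,\; w\theta)
\]
into $X$.

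The distance analysis splits into three cases. When $\omega_0 = \omega_0'$ and $k = k'$, shear invariance combined with the scaling rule gives $d(\tilde\omega, \tilde\omega') = w^2 d(\omega, \omega') \ge \delta$. When $\omega_0 = \omega_0'$ but $k \ne k'$, the same invariance reduces the distance to $|(t_k - t_{k'}) + w^2 D_f|$ with $D_f := (y - y') - \theta'(x - x')$ satisfying $|D_f| \le 4$, so by the triangle inequality $d \ge 5 w^2 - 4 w^2 = w^2 \ge \delta$. When $\omega_0 \ne \omega_0'$, direct expansion yields
\[
d(\tilde\omega, \tilde\omega') = |D_c + (t_k - t_{k'}) + w^2 D_f + w(\theta_0 - \theta_0') x - w\theta'(x_0 - x_0')|,
\]
where $|D_c| = d(\omega_0, \omega_0') \ge C_2 w$ and each of the other four terms is $O(w)$; choosing $C_2$ larger than the resulting absolute constant gives $d \ge \delta$. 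Since all pairwise distances in $X$ are strictly positive, the points are distinct and $|X| = m \cdot n(C_2 w) \cdot n(\delta/w^2) \ge w^{-1} n(C_2 w) n(\delta/w^2)$.

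The delicate point is case 1b. A naive recipe would shift the copies of $X_f$ along $\ell_0$ (in the first argument of $\Psi_{\omega_0}$), but then any $(x, y, 0) \in X_f$ would produce copies lying on a common parallel to $\ell_0$ with zero vertical distance. Shifting \emph{transversely} in $y$ with spacing $\gtrsim w^2$ cures this degeneracy, and the fact that an $O(w)$-thick slab accommodates $\sim w^{-1}$ such shifts (without disturbing case 2, in which the $O(w)$ shift range enters only as one of the $O(w)$ perturbation terms that are comfortably dominated by $|D_c| \ge C_2 w$) is precisely the origin of the $w^{-1}$ factor in the lemma.
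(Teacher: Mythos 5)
Your construction is correct and is essentially the paper's own proof: your $\Psi_{\omega_0}$ composed with the anisotropic scaling $(x,y,\theta)\mapsto(wx,w^2y,w\theta)$ is exactly the paper's map $\psi^{(w)}_{p_0,\theta_0}$, your transverse shifts $t_k$ with spacing $\gtrsim w^2$ match the paper's $p_{i,j}=p_i+(0,Cjw^2)$, and the three-case distance analysis (same cell, same coarse point but different shift, different coarse points) is the same, including the preliminary rescaling of the coarse configuration absorbed into $C_2$. No gaps; the expansion in the $\omega_0\neq\omega_0'$ case and the $w^2\geq\delta$ bounds all check out.
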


\begin{proof}[Proof of Theorem \ref{thm}]
    Let $w\in (0,1)$ be such that $\log (1/C_2w)\ge e c_1^{-1} C_2$ holds. Then for any $\d \le w^{2}$ we have
    \[
    n(\d) \ge w^{-1} n(C_2 w) n(\d/w^2) \ge c_1 C_2^{-1}w^{-2} \log(1/C_2w)n(\d/w^2) \ge e w^{-2} n(\d/w^2).
    \]
    Now fix $w$ and let $\d \in [w^{2m+2}, w^{2m}]$ for an integer $m\ge 1$, then iterating this inequality $m$ times gives
    \[
    n(\d) \ge e^m w^{-2m} n(\d/w^{2m}) \ge e^m w^{-2m} \ge e^m w^2 \d^{-1} \ge c \d^{-1-\gamma}
    \]
    for some $c=c(w)>0$ and $\gamma=\gamma(w)>0$.
\end{proof}

In the remainder of the section we prove the two lemmas.

\begin{proof}[Proof of Lemma \ref{lem1}]
    If is enough to prove the lemma for all sufficiently small $\d \le\d_0$ (for $\d>\d_0$ we can just choose $c_1$ small enough). For $p\in \R^2$ and $\theta \in [-1, 1]$ let $T(p, \theta) = p + (1, \theta)\R + \{0\} \times [-\d, \d]$, i.e. $T(p, \theta)$ is a strip of vertical width $2\d$ through $p$ at slope $\theta$.
    
    Let $N = [0.1 \d^{-1} (\log 1/\d)]$. 
    Let $p_1, \ldots, p_N \in [0,1]^2$ be a uniformly random and mutually independent collection of points and let $P = \{p_1, \ldots, p_N\}$. 
    For each point $p_i$, it suffices to show that the probability that there exists an angle $\theta\in [-1,1]$ such that $T(p_i, \theta) \cap P = \{p_i\}$ is at least $1/2$ (for $\d\le \d_0$). Indeed, by linearity of expectation this would give a collection of $N/2$ points among $p_1, \ldots, p_N$ so that for every $p_i$ in the collection there is an angle $\theta$ such that $T(p_i, \theta)\cap P =\{p_i\}$. In particular, this gives $n(\d) \ge N/2 \ge c \d^{-1}\log 1/\d$.
    
    Fix an index $i$. 
    Let $Q_i = p_i + [- \frac{1}{4\sqrt{N}}, \frac{1}{4\sqrt{N}}]^2$. 
    Note that for $i'\neq i$ we have $p_{i'}\in Q_i$ with probability at most $|Q_i|\le \frac{1}{4N}$. So by the union bound we have
    \begin{equation*}\label{eq:Q}
    \Pr[Q_i \cap P \neq \{p_i\}] \le (N-1) |Q_i| \le 1/4.    
    \end{equation*}
    So with probability at least $3/4$ there are no points inside $Q_i$ besides $p_i$. So it suffices to show that with probability at least $3/4$ there exists a slope $\theta$ such that $(T(p_i, \theta) \setminus Q_i)\cap P = \emptyset$. 

    Let $P' = P\setminus\{p_i\}$.
    Denote $M = [\frac{1}{8\d\sqrt{N}}]$ and let $\theta_j = 8j \d \sqrt{N}$, for $j=1, \ldots, M$. Note that for any $j\neq j' \in \{1, \ldots, M\}$ we have $T(p_i, \theta_j) \cap T(p_i, \theta_{j'})\subset  Q_i$. We now estimate the probability that $P' \cap (T(p_i, \theta_j) \setminus Q_i) = \emptyset$ for some $j \in \{1, \ldots, M\}$. 
    For a region $A \subset \R^2$ the probability that $A\cap P' = \emptyset$ is equal to $(1 - |A \cap [0,1]^2|)^{N-1}$. By mutual independence of the collection $\{p_1 \ldots, p_N\}$, this also holds if $A$ is a random set depending only on $p_i$. Since $T(p_i,\theta)$ has vertical width $2\d$, we have $|T(p_i,\theta) \cap [0,1]^2| \le 2\d$. So we can choose pairwise disjoint sets $A_j\subset [0,1]^2$ of area $2\d$ so that $A_j$ contains $T(p_i, \theta_j) \cap [0,1]^2 \setminus Q_i$.

    
    Let $Z_j = 1_{A_j \cap P' = \emptyset}$ be the indicator of the event that $A_j \cap P' = \emptyset$. Then we have 
    \[
    \E Z_j = (1-2\d)^{N-1} \ge \exp(- 2\d (N-1)) \ge \exp(-2\d \times 0.1\d^{-1} \log (1/\d)) \ge \d^{0.2}.
    \]
    Denote $q = (1-2\d)^{N-1}$. So the sum $Z= \sum_{j=1}^M Z_j$ has expectation $\E Z =M q$. Since $M = [\frac{1}{8\d\sqrt{N}}] \sim (\d \log(1/\d))^{-1/2} \gg \d^{-0.2}$, we have $Mq > 16$ for $\d \le \d_0$. We note that the variables $Z_j$ are negatively correlated: for $j\neq j'$ the product $Z_{j} Z_{j'}$ is the indicator of the event that $P' \cap (A_{j}\cup A_{j'}) = \emptyset$ and so $\E Z_j Z_{j'} = (1-4\d)^{N-1} \le q^2 = (\E Z_j) (\E Z_{j'})$. For $j=j'$ we have $\E Z_jZ_{j'} = \E Z_j = q$. 
    We use this to estimate the variance of $Z$:
    \begin{align*}
    \operatorname{Var}Z = \E Z^2 - (\E Z)^2= \sum_{j, j'=1}^M \left(\E Z_j Z_{j'}- (\E Z_j)(\E Z_{j'}) \right)\le  M q.
    \end{align*}
    By Chebyshev's inequality,
    \[
    \Pr[ |Z -\E Z| \ge \E Z/2] \le 4\frac{\operatorname{Var}Z}{(\E Z)^2} \le \frac{4}{M q} \le 1/4.
    \]
    We conclude that for every $i$, the probability that there exists $\theta$ with $T(p_i, \theta)\cap P=\{p_i\}$ is at least $1/2$ which in turn implies that $n(\d) \ge N/2 \ge c\d^{-1}\log 1/\d$, as desired.
\end{proof}

Fix $w>0$. For $(p_0, \theta_0)=(x_0,y_0, \theta_0)\in \Omega$ let us define an affine map $\psi^{(w)}_{p_0, \theta_0}: \R^2\to \R^2$ given by
\[
\psi^{(w)}_{p_0, \theta_0}(x, y) = (x_0 + w x, y_0 + w\theta_0 x + w^2 y)
\]
and extend it to a map on $\R^2\times \R$ by
\[
\psi^{(w)}_{p_0, \theta_0}(x, y, \theta) = (x_0 + w x, y_0 + w\theta_0 x + w^2 y, \theta_0 + w \theta).
\]
We note that if $|x_0|, |y_0|, |\theta_0| \le 1/2$ and $w\le 1/2$, then $\psi^{(w)}_{p_0, \theta_0}(\Omega) \subset \Omega$, i.e. we get a well-defined map on the configuration space of point-line pairs. Roughly speaking, the map $\psi^{(w)}_{p_0, \theta_0}$ sends a point line pair inside $[-1,1]^2$ to a point-line pair inside of a $w\times w^2$ rectangle centered at $p_0$ and inclined by slope $\theta_0$. 

The rescaling map $\psi=\psi^{(w)}_{p_0,\theta_0}$ interacts well with the distance $d$: a straightforward computation shows that for $\omega = (x,y, \theta), \omega'=(x',y',\theta')$ we have
\[
d(\psi(\omega), \psi(\omega')) = w^2 |y - y' - \theta'(x-x')| = w^2 d(\omega,\omega').
\]
In particular, for any $X \subset \Omega$ we have $d(\psi(X)) = w^2 d(X)$. Now we use the rescaling maps to prove Lemma \ref{lem2}.

\begin{proof}[Proof of Lemma \ref{lem2}]
    Fix a sufficiently large constant $C$ and let $w \in (0, 1/4C^2)$. Let $X_0 \subset \Omega$ be a configuration of size $k=n(4C^2w)$ such that $d(X_0)\ge 4C^2w$. Then $X_1 = \psi^{(1/2)}_{(0,0),0}(X_0) \subset [-1/2,1/2]\times[-1/4,1/4]\times [-1/2, 1/2]$ is a configuration with $d(X_1) \ge C^2 w$.
    Write $X_1 = \{ (p_1, \theta_1), \ldots, (p_k, \theta_k) \}$. 
    For $|j| \le w^{-1}$ let $p_{i, j} = p_i + (0, Cj w^2)$. Note that if $C\ge 4$ then $p_{i, j} \in [-1/2, 1/2]^2$. 
    For each $i=1, \ldots,k$ and $|j| \le w^{-1}$ we then have an affine map $\psi_{i,j} = \psi_{p_{i,j}, \theta_i}^{(w)}: \Omega\to \Omega$.

    Now given an arbitrary $X \subset \Omega$ we define a new configuration $\tilde X$ by
    \[
    \tilde X = \bigcup_{i\in [k], |j| \le w^{-1}} \psi_{i,j}(X).
    \]
    We claim that this is a disjoint union and for an appropriate $C$ we have $d(\tilde X) \ge w^2 \min(d(X),1)$. Given this, we can finish the proof as follows: by construction, we have $|\tilde X| = k (2[w^{-1}]-1) |X| \ge n(4C^2 w) w^{-1}|X|$. For $\d \le w^2$ and $w \le 1/4C^2$ we then get
    \[
    n(\d) \ge |\tilde X| \ge n(4C^2 w) w^{-1}|X| \ge  n(4C^2 w) w^{-1} n(\d/w^2).
    \]
    
    Now it remains to show that for any $(\tilde p, \tilde \theta) \neq (\tilde p', \tilde \theta') \in \tilde X$ we have $d((\tilde p, \tilde \theta), (\tilde p', \tilde \theta')) \ge w^2 \min(d(X),1)$. We can write
    \[
    \tilde \omega = \psi_{i,j}(p, \theta),\quad \tilde \omega' = \psi_{i', j'}(p', \theta'),
    \]
    for some $i, i' \in [k]$, $|j|, |j'|\le w^{-1}$ and $(p,\theta), (p', \theta') \in X$. We split into three cases:
    \begin{itemize}
        \item Suppose that $i\neq i'$. One can check that we have $\tilde p=\psi_{i,j}(p) \in p_i + [-w, w]\times [-2Cw, 2C w]$ and $\tilde p'=\psi_{i', j'}(p') \in p_{i'}+ [-w, w]\times [-2Cw, 2C w]$. Furthermore, we have $\tilde \theta' \in \theta_{i'} + [-w,w]$ and so we get
        \begin{align*}
        d((\tilde p, \tilde \theta), (\tilde p', \tilde \theta')) &\ge d((p_i, \theta_i), (p_{i'}, \theta_{i'})) - (4C+4)w \\&\ge d(X_1) -(4C+4)w \ge (C^2-4C-4) w \ge w    
        \end{align*}
        for $C\ge 5$.

        \item Suppose that $i=i'$ but $j\neq j'$. In this case, one can check that for $p = (x, y)$ and $p'=(x', y')$
        \[
        d((\tilde p, \tilde \theta), (\tilde p', \tilde \theta')) = |Cw^2(j-j') +  w^2 (y - y' - \theta'(x-x'))| \ge Cw^2 - 4w^2 \ge w^2,
        \]
        where we used $|j-j'| \ge 1$ and $C\ge 5$. 
        \item Finally, suppose that $i=i'$, $j=j'$ and $(p, \theta) \neq (p', \theta')$. Then we know that 
        \[
        d((\tilde p, \tilde \theta), (\tilde p', \tilde \theta')) = w^2 d((p, \theta), (p', \theta')) \ge w^2 d(X).
        \]
    \end{itemize}

    Putting the three cases together this implies the desired bound on $d(\tilde X)$ which completes the proof.
\end{proof}

\bibliographystyle{amsplain0.bst}
\bibliography{main}
\end{document}